\newcommand{\ifwork}[1]{\ifthenelse{\boolean{workmode}}{#1}{}}
\newcommand{\comment}[1]{}
\newcommand{\mute}[1]{}
\newcommand{\printname}[1]{}
\renewcommand{\comment}[1]{{\marginpar{*}\ \scriptsize{#1}\ }}
\renewcommand{\printname}[1]
    {\smash{\makebox[0pt]{\hspace{-1.0in}\raisebox{8pt}{\tiny #1}}}}
\newcommand{\labell}[1] {\label{#1} \printname{#1}}
\definecolor{laura}{rgb}{.4, 0, .6}
\definecolor{jaw}{rgb}{0,.5,0}
\newtheorem{theorem}{Theorem}[section]
\newtheorem{lemma}[theorem]{Lemma}
\newtheorem{proposition}[theorem]{Proposition}
\newtheorem{corollary}[theorem]{Corollary}
\theoremstyle{definition}
\newtheorem{definition}[theorem]{Definition}
\newtheorem{remark}[theorem]{Remark}
\newtheorem{example}[theorem]{Example}
\numberwithin{equation}{section}
\newcommand{\bi}{\begin{itemize}}
\newcommand{\ei}{\end{itemize}}
\newcommand{\be}{\begin{enumerate}}
\newcommand{\ee}{\end{enumerate}}
\newcommand{\G}{\mathcal{G}}
\newcommand{\calB}{\mathcal{B}}
\newcommand{\N}{\mathbb{N}}
\newcommand{\Z}{\mathbb{Z}}
\newcommand{\Hom}{\text{Hom}}
\newcommand{\Quad}{\quad\quad\quad\quad\quad\quad\quad\quad\quad}
\newcommand{\id}{\operatorname{id}}
\newcommand{\g}{{\mathcal{G}}}
\newcommand{\gbb}{{\mathcal{G}_b^b}}
\newcommand{\gub}{{\mathcal{G}^b}}
\newcommand{\glb}{{\mathcal{G}_b}}
\newcommand{\Map}{\operatorname{Map}}
\newcommand{\pr}{\operatorname{pr}}
\newcommand{\Top}{{\operatorname{\textbf{Top}}}}
\newcommand{\ZZ}{{\mathbb{Z}}}
\newcommand{\toto}{{~\rightrightarrows~}} %groupoid double arrow
\newcommand{\ftimes}[2]{{\lrsubscripts{\times}{#1}{#2}}} %fiber product
\theoremstyle{remark}
\theoremstyle{definition}
\theoremstyle{definition}
\numberwithin{equation}{section}
\title{Classifying Spaces and Bredon (Co)homology for Transitive Groupoids}
\author{Carla Farsi,   Laura Scull and Jordan Watts}
\date{\today}
\begin{document}

\begin{abstract} 
	We define the orbit category for transitive topological groupoids and their equivariant CW-complexes. By using these constructions we define equivariant Bredon homology and cohomology for actions of transitive topological groupoids.  We show how these theories can be obtained by looking at the action of a single isotropy group on a fiber of the anchor map, extending equivariant results for compact group actions.  We also show how this extension from a single isotropy group to the entire groupoid action can be applied to the structure of principal bundles and classifying spaces.
\end{abstract}

\maketitle

%\tableofcontents
%%%%%%%%%%%%%%%%%%%%%%%%%%%%%%%%%%%%%%%%%%%%%%%%%%%%%%%

%%%%%%%%%%%%%%%%%%
\section{Introduction}\label{s:intro}
%%%%%%%%%%%%%%%%%%

A groupoid, defined to be a small category where all arrows are invertible, is a natural generalization of a group.  Topological groupoids and Lie groupoids arise naturally in many subjects such as bundle theory, $C^*$ algebras, and mathematical physics.     One of the simplest examples is the pair groupoid of a topological space $X$, whose objects are points of $X$ and whose arrows are given by pairs $(x_1, x_2) \in X\times X$ with composition defined by $(x_1, x_2)(x_2, x_3) = (x_1, x_3)$.  

In this paper, we focus on the case of transitive topological groupoids where all objects are connected to each other by arrows. The pair groupoid above is an example of this.    These groupoids have been studied in various contexts.  An important class of examples of topological groupoids is that of gauge groupoids associated to principal bundles, and these groupoids  are also  transitive \cite{crainic}.   Work has been done on studying the structure of transitive groupoids and classifying them;  see for example \cite{razny} who studied Hilbert's fifth problem for transitive groupoids, and Mackenzie and Androulidakis  \cite{Mackenzie-class, Mackenzie-book, Androulidakis-connection, Androulidakis-classification} who used them in classifying principal bundles.    The related case of transitive Lie algebroids  has also been extensively studied, see \cite{chen-liu, fournel, misc}. 

When looking at actions of compact groups,  equivariant homotopy theory has developed a wide range of tools.  
Bredon homology and cohomology theories, first defined by Bredon \cite{Bredon},  have proved to be one of the most versatile, allowing fixed set  information to be incorporated.    This has led to results in equivariant obstruction theory, orientation theory, and covering spaces among others \cite{costenoble, costenoble2}.  

In this paper, we generalize Bredon homology and cohomology to the actions of transitive topological groupoids (see Definitions \ref{d:bredon-cohom}, \ref{d:bredon-hom}).    
A pivotal step in our construction is the generalization in Section \ref{s:trans gpd}  to transitive groupoids of many of the  structural formulas that apply to compact group actions \cite{May}.
These formulas allow us to define the orbit category for a transitive topological groupoid (see Definition \ref{d:orbit}), and consequently Bredon (co)homologies.

We develop results (see Propositions~\ref{p:formY} and \ref{p:rest}) showing how much of the equivariant theory associated to an isotropy subgroup extends naturally to the action of the entire groupoid.   We use this principle to define $\G$-CW-complexes which are used as the basis for our (co)homology theories.   We also show how this extension principle can be applied to the structure of classifying spaces.  

Every transitive groupoid with open source and target maps is Morita equivalent to the group defined by any of its (isomorphic) isotropy groups \cite{Williams}.  Therefore, the results of this paper can be seen as a partial Morita invariance result for the Bredon (co)homology theories.  Morita invariance results have shown up in other contexts, such as  in Pronk-Scull \cite{scull-pronk-translation}  who studied Morita invariance phenomena of equivariant cohomology theories, including Bredon cohomology and K-theory, in the context of orbifolds, and Williams \cite{Williams} who shows how to transfer Haar systems across Morita equivalence.  It is an interesting avenue for future research to see how generally this holds.  

$\G$-CW-structures for Lie groupoid actions have also been defined by Cantarero \cite{cantarero} while studying equivariant $K$-theory.  His groupoids are Lie but not transitive; in order to work in this generality, he makes an extension assumption that he calls `Bredon compatibility', and his CW-cells  are weakly equivalent to the action of a group on an entire finite CW-structure. Our CW-structures apply to transitive topological groupoids that are not Lie, and are much closer in spirit to the group case, where each cell carries constant orbit type.  This allows us to define  Bredon  theories which recover  information about the fixed sets of the space by  judicious choice of $\g$-coefficient system.  See Definitions \ref{d:coeff system} through \ref{d:bredon-hom}, and  Examples \ref{X:quotient} through \ref{X:bred}.

We also  derive some applications of Bredon homology to Smith theory when $\G$  acts on the space $X$ with anchor maps $a_X\colon X \to \G^0$ and  $a_{\G}:=(a_X)|_{X^\G}\colon X^\G \to \G^0$  fibrations. The results of 
  Smith theory for groups  extend to groupoid actions using the fact that 
  Euler characteristics with coefficients in a field are multiplicative on fibrations; see Theorem  \ref{them:A-Smith-theory} and Corollary \ref{cor:Smith-Theory}.
  Our definition of Bredon cohomology  also allows the development of  an obstruction theory for spaces with actions of transitive groupoids, also extending the group case;  see Remark \ref{th:obstruction}.

  The paper is organized as follows.  Section \ref{s:background} gives basic definitions, sets notation, and gives background material.  In Section \ref{s:trans gpd} we turn specifically to the study of actions of transitive groupoids, and prove the foundational results that set up the extension from group actions to transitive groupoid actions.  Section \ref{s:bredon} defines $\G$-CW-structures and Bredon (co)homology for actions of a transitive groupoid $\G$; Subsection~\ref{ss:examples} contains some examples and Subsection~\ref{ss:apps} contains an application to Smith Theory.  Finally, Section \ref{s:princ bdle} gives extension results about principal bundles.

{\bf{Acknowledgements.}} C.F.\ was partially supported by the
Simons Foundation  grant \#523991.  The authors thank Andrew Putman for helpful correspondence, and the anonymous referee for many excellent suggestions.

%%%%%%%%%%%%%%%%%%%%%%%%%%%%%%%%%%%%%
\section{Background: Groupoids and Groupoid Actions}\labell{s:background}
%%%%%%%%%%%%%%%%%%%%%%%%%%%%%%%%%%%%%

For the purposes of this paper, we assume that we are working in the category $\Top$ of Hausdorff compactly generated spaces and continuous maps (recall that a Hausdorff space $X$ is compactly generated if any subspace $A$ is closed in $X$ if and only if $A \cap K$  is closed in $K$ for all compact subspaces $K \subseteq X$).  This could be weakened to compactly generated weakly Hausdorff spaces, as is often done in CW-theory, but this would lengthen the proofs.
 
Most of the material in this section, including the notation, comes from \cite{renault}.  Recall the fibered product in the topological category: given spaces $X$, $Y$, and $Z$ with maps $f\colon X\to Z$ and $g\colon Y\to Z$, we have
$$X\ftimes{f}{g} Y:=\{(x,y)\in X\times Y\mid f(x)=g(y)\}.$$
We sometimes will denote this by $X\times_Z Y$ when the maps $f$ and $g$ are understood.
 
Let $\G$ be a groupoid.  Consider $\G$ defined by a space of objects $\g^0$, a space of arrows $\g^1$, and  structure maps defining the groupoid structure.  Explicitly, we have
\begin{itemize}
\item $i\colon  \g^0 \to \g^1$ sending an object to its identity arrow,
\item the source and target maps $s, t\colon \g^1 \to \g^0$,
\item the inverse map $\g^1 \to \g^1$ sending $f$ to $f^{-1}$, and
\item  the composition map $m\colon  \g^1\!\ftimes{s}{t} \g^1 \to \g^1$, where we denote composition by  $fg:=f\circ g$.
\end{itemize}
These structure maps are required to satisfy the standard groupoid axioms:
\begin{itemize}
\item if $f\colon  x \to y$, then $fi(x) = f$ and $i(y)f = f$,
\item $(hg)f = h(gf)$ for all composable $f$, $g$, and $h$,
\item $ff^{-1} = i(y)$, and
\item $f^{-1}f = i(x)$.
\end{itemize}

\begin{definition}[Topological Groupoid]\labell{d:top gpd}
A \textbf{topological groupoid} is a groupoid $\G$ in which $\G^0$ and $\G^1$ are Hausdorff, the structure maps are continuous, and $s$ (and hence $t$) is open. 
\end{definition}

\begin{definition}[Stabilizer]\labell{d:stabiliser}
Fix $b\in\G^0$.  The \textbf{stabilizer group} of $b$ is the group
$$\gbb := \{ g \in \g^1 \mid s(g) = t(g)  =b\}.$$
\end{definition}

\begin{definition}[Source Fiber]\labell{d:source fiber}
Fix $b\in\G^0$.  The \textbf{source fiber} of $b$ is the space 
$$\glb := \{ g \in \g^1 \mid s(g)  =b\}.$$
\end{definition}

\begin{definition}[Target Fiber]\labell{d:target fiber}
Fix $b\in\G^0$.  The \textbf{target fiber} of $b$ is the space 
$$\gub := \{ g \in \g^1 \mid t(g)  =b\}.$$
\end{definition}

We will additionally require one of the following two conditions:  either $s$ (and hence $t$) is proper, or $t|_{\glb}\colon\glb\to\g^0$ admits local sections.   We will specify which topological condition is in place for each result.  

\begin{remark}  Without some  topological conditions, we can get strange behavior; for example, see \cite{Buneci,Williams}.  The conditions of properness and local sections are closely related; in the Lie groupoid setting, one always has local sections of the source and target maps (as these are submersions),  and properness of these maps is often assumed to obtain nice properties; see, for instance, \cite{crainic-struchiner}.

The condition on $\g$ that $t|_{\glb}\colon\glb\to\g^0$ admits local sections is rather mild; it is equivalent to requiring $s|_{\gub}\colon\gub\to\G^0$ to be a principal $\gbb$-bundle.  This is guaranteed in the case, for example, that $\gbb$ is a topological manifold, $\g^1$ is first countable, and $s|_{\gub}$ is open.  This follows from the solution to Hilbert's fifth problem, and the relationship between topological groupoids and so-called Cartan principal bundles.  See, for example, \cite{razny,razny2}.  
\end{remark}

Given a groupoid $\G$, we can talk about actions of $\G$ on spaces, see for example \cite{Lerman, renault}. 

\begin{definition}[Right Action]\labell{d:right action}
A \textbf{right $\G$-space} is a space $X$ equipped with an \textbf{anchor map} $a_X\colon  X \to \g^0$ and an \textbf{action} $X\ftimes{a_X}{t} \G^1 \to X\colon (x,g)\mapsto xg$ satisfying the following conditions:
\begin{enumerate}
\item $a_X(xg) = s(g)$ for all $(x,g)\in X\ftimes{a_X}{t} \G^1$,
\item $x\, (i(a_X(x))= x$ for all $x\in X$, and
\item $(xg)g' = x(gg')$ for all $(x,g)\in X\ftimes{a_X}{t} \G^1$ and arrows $g'$ right-composable with $g$.
\end{enumerate}
We say that a right $\G$-space is \textbf{proper} if the anchor map $a_X$ is a proper map.
\end{definition} 

\begin{example}\labell{x:right action}
\noindent
\begin{enumerate}
			\item \labell{i:right action 1}
		The object space $\g^0$ is a right $\g$-space with anchor map $a_{\G^0} = \id_{\g^0}$, the identity map on $\g^0$, and action given by $yg = x$ for $g\colon x \to y$. 
		\item \labell{i:right action 2} Fix $b\in\g^0$.  The target fiber $\gub$ is a right $\g$-space with anchor map equal to the source map $s$, and action given by composition on the right.
		\end{enumerate}
\end{example}

\begin{definition}[Orbits and Orbit Space]\labell{d:orbitspace2}
 Let $X$ be a right $\G$-space.  Define an equivalence relation $\sim$ on $X$ by: $x\sim y\in X$ if there exists $g\in\g^1$ with $xg=y$. Denote by $[x]$ the equivalence class of $x$ with respect to $\sim$; we will also call $[x]$ the \textbf{orbit} of $x$. The \textbf{orbit space} $X/\g$ is the quotient space $X/\!\sim$.
 \end{definition}

\begin{definition} (Fixed Point Set \cite{deaconu})
\label{x:fixed point set}
Let $X$ be a right  $\G$-space  with anchor map $a_X: X \to \G^0$. The {\bf fixed set} $X^\G$ is defined by
\[
X^\G := \left\{ x \in X~\Big|~ gx=x, \ \forall  g \in \G_{a_X(x)}^{a_X(x)} \right\}.
\] 
\end{definition}

We also have notions of $\g$-equivariant maps, products, and push-outs.

\begin{definition}[Equivariant Map]\labell{d:equivariant}
A map $\psi\colon X\to Y$ between $\g$-spaces is a \textbf{$\G$-equivariant map} (or a \textbf{$\g$-map}) if it commutes with the anchor maps (\emph{i.e.}\ $a_X = a_Y\circ\psi$) and commutes with the actions (\emph{i.e.}\ $\psi(xg) = \psi(x)g$).  If additionally $\psi$ has a continuous inverse $\psi^{-1}$, then $\psi^{-1}$ is necessarily also a $\G$-map, and we say that $\psi$ is a \textbf{$\g$-homeomorphism}, and that $X$ and $Y$ are \textbf{$\g$-homeomorphic}.
\end{definition}
 
\begin{remark}\labell{r:left action}
Fix $b\in\g^0$. The target fiber $\gub$ admits a {\bf left action} of the group $\gbb$ via left composition, making it a $\gbb$-equivariant space in the classical group sense.  This action commutes with the right action in Item~\ref{i:right action 2} of Example~\ref{x:right action}; hence the quotient $\gub/\gbb$ inherits a well-defined action of $\g$ given by $[f]g:=[fg]$ for composable arrows $f$ and $g$, with injective anchor map $\widetilde{s}\colon\gub/\gbb\to\g^0$ defined by $\widetilde{s}([f])=s(f)$.  Moreover, with respect to the right $\g$-action on $\g^0$ of Item~\ref{i:right action 1} of Example~\ref{x:right action}, $\widetilde{s}$ is $\g$-equivariant.
\end{remark}

\begin{definition}[Product]\labell{d:prod}
Given two $\G$-spaces $X$ and $Y$, define the \textbf{product} $\G$-space as 
$$X \times_{\g^0} Y = \{ (x, y) \in X \times Y \mid a_X(x) = a_Y(y) \},$$
equipped with anchor map $a(x, y) := a_X(x) = a_Y(y)$ and action defined by $(x, y)g := (xg, yg)$ for $g$ satisfying $t(g) = a_X(x) = a_Y(y)$.
\end{definition}

\begin{definition}[Pushout]\labell{d:po} Given $\g$-maps $\varphi\colon  X \to Y$ and $\psi\colon  X \to Z$, define the \textbf{pushout} of $\varphi$ and $\psi$ to be the space $\left(Y \amalg Z\right)/{\sim}$, where $\sim$ is the equivalence relation generated by the relation $y \sim z$ if there exists $x \in X$ such that $\varphi(x) = y $ and $\psi(x) = z$.  The anchor map is given by $a([y]) := a_Y(y)$ and $a([z]) := a_Z(z)$ for $y\in Y$ and $z\in Z$.  This is well-defined, since if $y \sim z$ then there exists $x$ such that $\varphi(x) = y $ and $\psi(x) =  z$, and so $a_Y(y) = a_Z(z) = a_X(x)$.  The $\G$-action is given by using the action on $Y$ and $Z$: if $y \in Y$ and $t(g) = a(y)$ then we have $$([y],g) \in \left(\left(Y \amalg Z\right)/\!\sim\right)\ftimes{a}{t}\G^1$$ and we define $[y]g = [yg]$; we similarly define $[z]g:=[zg]$ for $z\in Z$.  This is also well-defined: if $y \sim z$ then $$yg = \varphi(x)g = \varphi(xg) \sim \psi(xg) = \psi(x) g = z g.$$  
\end{definition}

%%%%%%%%%%%%%%%%%%%%%%%%%%%%%
\section{Actions of Transitive Groupoids}\label{s:trans gpd}
%%%%%%%%%%%%%%%%%%%%%%%%%%%%%

We now turn to the study of actions of transitive groupoids, the main objects of this paper. In this section we generalize many of the structural formulas that apply to compact group actions \cite{May} to actions of transitive groupoids.  We begin by recalling their definition. 

\begin{definition}[Transitive Groupoid]\labell{d:trans gpd}
A groupoid $\G$ is \textbf{transitive} if $\G^0 /\G^1 $ is a single point.
\end{definition}

\begin{remark}\labell{r:trans gpd}
Any right action of a transitive groupoid has a surjective anchor map.
\end{remark}

\begin{example} Examples of transitive groupoids are pair groupoids, fundamental groupoids, and the gauge groupoid  (also called the Atiyah groupoid) $\G:= \Big( P \times P\Big)/G$ associated to a principal $G$-bundle $P$ (where $G$ is a group).
\end{example}

\begin{lemma}\labell{l:L1}
Let $\G$ be a transitive groupoid, and fix $b\in\G^0$.   The quotient $\gub/\gbb$ is $\g$-homeomorphic to $\g^0$, where the target fiber $\gub$  admits the canonical standard (left) action of the group $\gbb$.
\end{lemma}
 
\begin{proof}
By Remark \ref{r:left action}, $\gub/\gbb$ is a right $\g$-space with action $[f]g=[fg]$ and injective anchor map $\widetilde{s}([f]):=s(f)$ which is $\g$-equivariant with respect to the right $\g$-action on $\g^0$.  Moreover, since $\g$ is transitive, $\widetilde{s}$ is surjective.  Since $s$ is open, $\widetilde{s}$ is also open, and hence a homeomorphism.
\end{proof}

\begin{remark}\labell{r:L1}
Note that we use the openness of the source map in the proof above.  We could replace this condition with the requirement that $t$ be proper, in which case $\gub$ and hence $\gub/\gbb$ are compact.  Since $\g^0$ is Hausdorff, openness of $\widetilde{s}$ follows.
\end{remark}

\begin{remark}[Anchor Fibers]\labell{r:anchor fibers}
Let $Y$ be a right $\g$-space with anchor map $a$, and fix $b \in a(Y)$.  Denote the fiber $a^{-1}(b)$ by $Y_b$.   Then the $\g$-action on $Y$ restricts to a right $\gbb$-action on $Y_b$.

Conversely, given any right $\gbb$-space $X$, we can define a right $\g$-space:  recall from Remark \ref{r:left action} that $\gub$ has a {\em left} $\gbb$-action.  The \textbf{anti-diagonal action} of $\gbb$ on $X\times\gub$ is given by $g(x,f):=(xg^{-1},gf).$  This action commutes with the right action of $\g$ on $X\times\gub$ given by $(x,f)f':=(x,ff')$ with anchor map $(x,f)\mapsto s(f)$.  Thus the orbit space $X\times_{\gbb}\gub$ of the anti-diagonal action is a right $\g$-space with anchor map $a([x,f])=s(f)$ and action $[x,f]f' = [x,ff']$.
\end{remark}

For a transitive groupoid $\g$, any right $\g$-space is essentially an orbit space $X\times_{\gbb}\gub$ as in Remark~\ref{r:anchor fibers}.

\begin{proposition}\labell{p:formY}
Let $\g$ be a transitive groupoid with proper target map, fix $b\in\g^0$, and let $Y$ be a proper right $\g$-space.  The quotient $Y_b \times_{\gbb} \gub$ is $\g$-homeomorphic to $Y$.  Moreover, the construction of $Y_b\times_\gbb\gub$ given $Y$ is natural in $Y$.
\end{proposition}

\begin{proof}
Let $a$ be the anchor map of $Y$. The action map $Y\!\ftimes{a}{t}\G^1\to Y$ restricts to a continuous map $(Y_b) \ftimes{a}{t} \G^1=Y_b\times\gub \to Y$.
This restriction is constant on orbits of the anti-diagonal action of $\gbb$ on $Y_b\times\gub$, and so descends to a continuous map $I\colon Y_b\times_{\gbb}\gub\to Y$.  This map is $\G$-equivariant and injective, and surjectivity follows from the transitivity of $\G$.  Since $a$ and $t$ are proper, $Y_b\times\gub$ and hence $Y_b\times_{\gbb}\gub$ are compact.  Since $Y$ is Hausdorff, $I$ is a $\G$-homeomorphism.

For naturality, let $F\colon X\to Y$ be a $\g$-map, and denote by $I_X$ (resp.\ $I_Y$) the $\g$-homeomorphism constructed above from $X_b\times_\gbb\gub$ (resp.\ $Y_b\times_\gbb\gub$) to $X$ (resp.\ $Y$).  The restriction to $X_b$ induces a $\gbb$-map $F_b\colon X_b\to Y_b$.  Since $F_b\times\id_{\gub}$  is $\gbb$-equivariant with respect to the anti-diagonal actions on $X_b\times\gub$ and $Y_b\times\gub$, and also $\g$-equivariant with respect to the right $\g$-actions $(x,g)g'=(x,gg')$ and $(y,g)g'=(y,gg')$ for $x\in X$ and $y\in Y$, it descends to a $\g$-map $\widetilde{F}\colon X_b\times_\gbb\gub\to Y_b\times_\gbb\gub$ such that $F\circ I_X=I_Y\circ\widetilde{F}$.  This proves naturality.
\end{proof}

\begin{remark}  \labell{i:formP}  This result holds when neither the target map $t$ of $\g$ nor $Y$ are necessarily proper, but instead $t|_{\glb}$ admits local sections.  Indeed, consider a $\g$-space $Y$ and define 
 $\Phi\colon Y\to Y_b\times_\gbb\gub$ by $\Phi(y)=[y_b,g]$ where $y_b\in Y_b$ and $g\in\gub$ such that $y_bg=y$.  Then $\Phi$ is well-defined.  To show that it is continuous, let $a\colon Y\to\g^0$ be the anchor map, fix $y\in Y$, let $V$ be an open neighborhood of $a(y)$ for which there exists a local section  $\tau\colon V\to t|_{\glb}^{-1}(V)$  of $t|_{\glb}$.  Then $\Phi(y)=[y\tau(a(y)),\tau(a(y))^{-1}]$, which is continuous on $a^{-1}(V)$, and independent of the local section $\tau$ chosen.  It follows that $\Phi$ is continuous.  As it is an inverse to the map $I\colon Y_b\times_{\gbb} \gub\to Y$ constructed in the proof of Proposition~\ref{p:formY}, $\Phi$ is a $\G$-homeomorphism.  
\end{remark}

So the structure of a proper right $\g$-space $Y$ is determined by a fiber $Y_b$ regarded as a right $\gbb$-space.   Similarly, a $\g$-map is determined by its restriction to an anchor fiber.  In fact, equipping spaces of maps with the compact-open topology, we have:

\begin{proposition}\labell{p:rest}
Let $\G$ be a transitive groupoid with proper target map, fix $b\in\g^0$, and let $X$ and $Y$ be proper right $\G$-spaces.  There is a homeomorphism from $\Map_{\G}(Y, X)$ to $\Map_{\gbb}(Y_b, X_b)$ that is natural in each variable.
\end{proposition}

\begin{proof}
We will make use of the following notation: if $Z_1,Z_2$ are spaces, $K\subseteq Z_1$ compact and $U \subseteq Z_2$ open, then $V(K,U)$ is the set of all continuous maps $Z_1\to Z_2$ sending $K$ to $U$; \emph{i.e.} a standard sub-basis element of the compact-open topology on $\Map_{\Top}(Z_1,Z_2)$.

The restriction map $r\colon\Map_{\Top}(Y,X)\to\Map_{\Top}(Y_b,X)$ is continuous, and it restricts to a continuous map $r\colon\Map_\G(Y,X)\to\Map_{\Top}(Y_b,X)$. The natural inclusion $\Map_{\Top}(Y_b,X_b)\to\Map_{\Top}(Y_b,X)$ is a topological embedding, and hence so is $\Map_{\gbb}(Y_b,X_b)\to\Map_{\Top}(Y_b,X)$. Since $r$ sends $\g$-maps $Y\to X$ to $\gbb$-maps $Y_b\to X_b\subseteq X$, it follows that $r$ induces a unique continuous map, also denoted $r$, sending $\varphi\in\Map_{\G}(Y,X)$ to its restriction $\varphi_b\in\Map_{\gbb}(Y_b,X_b)$.

Let $p\colon\Map_{\Top}(Y_b,X_b)\to\Map_{\Top}(Y_b\times \gub,X_b\times \gub)$ be the map sending $\psi$ to $(\psi,\id_\gub)$.  To show that $p$ is continuous, it is sufficient to show that $p^{-1}(V(K,U_1\times U_2))$ is open for $K\subseteq Y_b\times\gub$ compact, $U_1\subseteq X_b$ open, and $U_2\subseteq\gub$ open.  If this is empty, then we are done, so suppose otherwise for a fixed $K$, $U_1$, and $U_2$.  We will prove $p^{-1}(V(K,U_1\times U_2)) = V(\pr_1(K),U_1)$, where $\pr_1$ is the first projection map.  Note:
\begin{equation}\labell{e:rest}
(k,g)\in K \Rightarrow g\in U_2.
\end{equation}
If $(\psi,\id_\gub)\in V(K,U_1\times U_2)$ is in the image of $p$ and $k\in\pr_1(K)$, then \eqref{e:rest} implies there exists $g\in U_2$ such that $(k,g)\in K$.  Hence, $(\psi,\id_\gub)(k,g)\in U_1\times U_2$. So $\psi\in V(\pr_1(K),U_1)$. On the other hand, if $\psi\in V(\pr_1(K),U_1)$, then \eqref{e:rest} implies for any $(k,g)\in K$, we have $p(\psi)(k,g)=(\psi,\id_\gub)(k,g)\in U_1\times U_2$.  Thus $p(\psi)\in V(K,U_1\times U_2)$.  We conclude that $p^{-1}(V(K,U_1\times U_2))=V(\pr_1(K),U_1)$, and in turn that $p$ is continuous. 

Next, the restriction of $p$ to $\Map_{\gbb}(Y_b,X_b)$ is continuous.  Moreover, for $\psi\in\Map_{\gbb}(Y_b,X_b)$ the map $p(\psi)$ is a $\gbb$-map with respect to the anti-diagonal actions, and also a $\g$-map with respect to the right $\g$-actions.  By the universal property of subspaces, there exists a unique continuous map, also denoted by $p$,
$$p\colon\Map_\gbb(Y_b,X_b)\to\Map_\gbb(Y_b\times\gub,X_b\times\gub)\cap\Map_\g(Y_b\times\gub,X_b\times\gub)$$
sending $\psi$ to $(\psi,\id_{\gub})$.

Let $\pi_Y\colon Y_b\times\gub\to Y_b\times_{\gbb}\gub$ and $\pi_X\colon X_b\times\gub\to X_b\times_{\gbb}\gub$ be the quotient maps induced by the anti-diagonal actions.  Let $q\colon\Map_\gbb(Y_b\times\gub,X_b\times\gub)\to\Map_{\Top}(Y_b\times_\gbb\gub,X_b\times_\gbb\gub)$ be the map sending $\gbb$-maps to continuous maps between the quotients.  Let $K\subseteq Y_b\times_\gbb\gub$ be compact and $U\subseteq X_b\times_\gbb\gub$ be open.  Since $Y_b$ and $\gub$ are compact, $\pi_Y^{-1}(K)$ is compact.  It follows that $q^{-1}(V(K,U))$ is the set of all $\gbb$-maps in $V(\pi_Y^{-1}(K),\pi_X^{-1}(U))$; hence $q$ is continuous.  Since the anti-diagonal actions commute with the right $\g$-actions, by the universal property of subspaces, $q$ induces a continuous map (also called $q$)
$$q\colon\Map_\gbb(Y_b\times\gub,X_b\times\gub)\cap\Map_\g(Y_b\times\gub,X_b\times\gub)\to\Map_\g(Y_b\times_{\gbb}\gub,X_b\times_{\gbb}\gub).$$

Denote by $I_Y$ (resp.\ $I_X$) the $\g$-homeomorphism from $Y_b\times_{\gbb}\gub$ to $Y$ (resp.\ $X_b\times_{\gbb}\gub$ to $X$) given in Proposition~\ref{p:formY}, sending $[y,h]$ to $yh$.  Let $C_{Y,X}$ be the map sending a $\G$-map $\alpha\colon Y_b\times_{\gbb}\gub\to X_b\times_{\gbb}\gub$ to $I_X\circ\alpha\circ I_Y^{-1}\in\Map_{\g}(Y,X)$.  Since composition is a continuous operation, $C_{Y,X} $ is a continuous map.

$$\xymatrix{
\Map_{\gbb}(Y_b,X_b) \ar[rrr]^{p\Quad} & & & \Map_{\gbb}(Y_b\times\gub,X_b\times\gub)\cap\Map_{\G}(Y_b\times\gub,X_b\times\gub) \ar[d]^{q} \\
\Map_{\G}(Y,X) \ar[u]^{r} & & & \Map_{\G}(Y_b\times_{\gbb}\gub,X_b\times_{\gbb}\gub) \ar[lll]^{C_{Y,X}\quad}\\
}$$

We claim that $r$ and $C_{Y,X}\circ q\circ p$ are inverses of each other.  Indeed, for any $y\in Y$, we have $I_Y^{-1}(y)=[yh,h^{-1}]$ where $h\colon b\to a_Y(y)$; this is independent of the choice of $h$ (a similar equality holds for $I_X^{-1}$).  Thus, for $\varphi\in\Map_{\g}(Y,X)$ and $y\in Y$,
$$C_{Y,X}\circ q\circ p\circ r(\varphi)(y)=I_X([\varphi_b(yh),h^{-1}])=\varphi_b(yh)h^{-1},$$
where $\varphi_b$ is the restriction of $\varphi$ to $Y_b$.  In particular,
$$\varphi_b(yh)h^{-1}=\varphi(yh)h^{-1}=\varphi(y).$$
Similarly, for $\psi\in\Map_{\gbb}(Y_b,X_b)$, we have $r\circ C_{Y,X}\circ q\circ p(\psi)=\psi$.

Finally, let $Z$ and $W$ be right $\G$-spaces and $F\colon Y\to Z$ and $G\colon W\to X$ be $\g$-maps.  Denote by $r_{ZW}$ the restriction map $\Map_{\g}(Z,W)\to\Map_{\gbb}(Z_b,W_b)$ obtained above.  Then letting $F^*$ (resp.\ $G_*$) be the continuous operation of pre- (resp.\ post-) composition by $F$ (resp.\ $G$), and noting for $\varphi\in\Map_\g(Z,X)$ the equalities $r_{YX}(F^*\varphi)=r_{ZX}(\varphi)\circ r_{YZ}(F)=\varphi_b\circ F_b$, we have the two commutative diagrams below.  Naturality in both variables $X$ and $Y$ follows.

$$\xymatrix{
\Map_{\g}(Y,X) \ar[r]^{r_{YX}~~} & \Map_{\gbb}(Y_b,X_b) & & \Map_{\g}(Y,W) \ar[r]^{r_{YW}~~} \ar[d]_{G_*} & \Map_{\gbb}(Y_b,W_b) \ar[d]^{(G_b)_*} \\
\Map_{\g}(Z,X) \ar[r]_{r_{ZX}~~} \ar[u] ^{F^*} & \Map_{\gbb}(Z_b,X_b) \ar[u]_{F^*_b} & & \Map_{\g}(Y,X) \ar[r]_{r_{YX}~~} & \Map_{\gbb}(Y_b,X_b) \\
}$$
\end{proof}

\begin{remark}\label{r:rest}
We will later need a version of Proposition~\ref{p:rest} where we do not assume the properness of $t$ nor the anchor maps, but instead assume the existence of local sections of $t|_{\glb}$ and that $\gbb$ is compact.  This appears as Lemma~\ref{i:rest}.
\end{remark}

In a similar vein, we can use these results to show that the quotient spaces of a $\G$-space and its fiber agree. 

\begin{proposition}\labell{p:quots}
Let $\G$ be a transitive groupoid with proper target map, fix $b\in\g^0$, and let $Y$ be a proper right $\G$-space.  Then $Y/\G$ is homeomorphic to $Y_b/\gbb$.
\end{proposition}

\begin{proof}
Let $i\colon Y_b\to Y_b\times_\gbb\gub$ be the injection $y\mapsto [y,i(b)]$, where $i(b)$ is the identity arrow at $b$.  Since $\gbb$-orbits map into $\G$-orbits, $i$ descends to a continuous map $j\colon Y_b/\gbb\to \left(Y_b\times_\gbb\gub\right)\!/\G$.  

On the other hand, let $p\colon Y_b\times\gub\to Y_b$ be the projection map.  This is $\gbb$-equivariant, and so descends to a continuous map $q\colon Y_b\times_{\gbb}\gub\to Y_b/\gbb$.  Also, $q$ is $\G$-invariant with respect to the right $\G$-action on $Y_b\times_{\gbb}\gub$, so $q$ descends to a continuous map $r\colon \left(Y_b\times_\gbb\gub\right)\!/\G\to Y_b/\gbb$.

It is straightforward to check that $j$ and $r$ are inverses of one another.  By Proposition~\ref{p:formY}, $Y$ is $\G$-homeomorphic to $Y_b\times_\gbb\gub$, and so they have homeomorphic orbit spaces via the $\G$-actions.  
\end{proof}

\begin{example}\label{X:quots}
Let $(M,\omega)$ be a connected symplectic manifold equipped with a free right Hamiltonian action of a compact Lie group $G$, with proper $G$-equivariant momentum map $\Phi\colon M\to\mathfrak{g}^*$ with respect to the coadjoint action of $G$ on $\mathfrak{g}^*$ (which we take to be a right action).  Fix a regular value $b\in\mathfrak{g}^*$ of $\Phi$, let $\mathcal{O}$ be its coadjoint orbit, and set $Y=\Phi^{-1}(\mathcal{O})$.  Finally, let $\G$ be the action groupoid $\mathcal{O}\times G\toto\mathcal{O}$, which is transitive. 

$Y$ is a $\G$-space, where the $\G$-action is simply the $G$-action (recall that $Y$ is $G$-invariant since $\Phi$ is $G$-equivariant) and the anchor map is $\Phi$.  From Proposition~\ref{p:quots} we obtain a homeomorphism between two standard representations of the symplectic quotient with respect to $b$; namely, $\Phi^{-1}(b)/\gbb$ and $\Phi^{-1}(\mathcal{O})/G$.  This homeomorphism is also a consequence of a well-known fact in symplectic geometry (that the two quotients are symplectomorphic); see \cite[Theorem 6.4.1]{OR}.
\end{example}

We now look at proper $\g$-spaces which have trivial actions.  

\begin{proposition}\labell{triv} Let $\G$ be a transitive groupoid with proper target map, and let $Y$ be a proper right $\G$-space with anchor map $a$.  The following are equivalent:  

\begin{enumerate}
\item There exists $b\in\g^0$ such that $Y$ is $\g$-homeomorphic to  $Y_b \times \g^0$ with anchor map $(y,b')\mapsto b'$ and action $(y, b') g = (y, b'')$  for $g\colon  b'' \to b'$.
\item Given $y \in Y$,  for any  $g, g'  \in \g^1$ with $t(g) = t(g') = a(y)$ and $s(g)=s(g')$,  we have  $yg = yg'$.  
\item  For all $b \in a(Y)$, the action of  $ \gbb$ on $Y_b$ is trivial. 

\item   For some $b \in a(Y)$, the action of $\gbb$ on $Y_b  $ is trivial.  
\end{enumerate}
\end{proposition}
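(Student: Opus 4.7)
The plan is to prove the chain of implications $(1)\Rightarrow(2)\Rightarrow(3)\Rightarrow(4)\Rightarrow(1)$, since this arrangement handles each condition with minimal redundancy and pushes all of the real work into the final step.

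First, for $(1)\Rightarrow(2)$ I would simply unpack the product description: in the model $Y_b\times\G^0$ the right action $(y,c)g=(y,c')$ for $g\colon c'\to c$ depends only on the source and target of $g$, so any two arrows sharing source and target act identically. For $(2)\Rightarrow(3)$, I would fix $c\in\G^0$ and observe that every $f\in\mathcal{G}_c^c$ shares its source and target with the identity $i(c)$; condition (2) then forces $yf=yi(c)=y$ for all $y\in Y_c$. The implication $(3)\Rightarrow(4)$ is immediate.

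The heart of the argument is $(4)\Rightarrow(1)$, which I would handle using the structural results already available. Assume $\gbb$ acts trivially on $Y_b$. By Proposition~\ref{formY}, $Y$ is $\G$-homeomorphic to $Y_b\times_{\gbb}\gub$; but under the trivial action the defining relation $(yf,g)\sim(y,fg)$ collapses to $(y,g)\sim(y,fg)$, which is exactly the relation producing the product $Y_b\times(\gub/\gbb)$. I would then invoke Lemma~\ref{L1} to replace $\gub/\gbb$ with $\G^0$ via the $\G$-equivariant homeomorphism $\widetilde{s}$. A short chase through these identifications shows that an arrow $h\in\G^1$ with $t(h)=s(g)$ sends $[(y,g)]$ to $[(y,gh)]$, which corresponds to $(y,s(h))$ on the product side, matching the action prescribed in (1); likewise the anchor map becomes projection on the second factor.

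The main obstacle, to the extent there is one, is the topological bookkeeping in $(4)\Rightarrow(1)$: I would need to confirm that the chain $Y\simeq Y_b\times_{\gbb}\gub\simeq Y_b\times(\gub/\gbb)\simeq Y_b\times\G^0$ consists of homeomorphisms and not merely continuous bijections. This should follow the same compactness/properness argument used in Lemma~\ref{L1} and Proposition~\ref{formY}: properness of the anchor map makes $Y_b$ compact, hence each product space is compact, so continuous bijections to Hausdorff spaces are automatically homeomorphisms. Once that is in place, the equivalence is complete.
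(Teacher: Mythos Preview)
Your proposal is correct and follows exactly the paper's approach: the cycle $(1)\Rightarrow(2)\Rightarrow(3)\Rightarrow(4)\Rightarrow(1)$, with the substantive step $(4)\Rightarrow(1)$ handled via Proposition~\ref{formY} and Lemma~\ref{L1}. You supply more detail than the paper (which dismisses the first three implications as ``clear'' and skips the topological bookkeeping you flag), but the argument is the same.
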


\begin{proof}
That $(1) \Rightarrow (2) \Rightarrow (3) \Rightarrow (4)$ is clear.  To show that $(4) \Rightarrow (1)$, note that if $\gbb$ acts trivially on $Y_b$, then $Y_b\times_{\gbb}{\gub}$ is $\g$-homeomorphic to $Y_b\times (\gub/\gbb)$.  The result follows from Proposition~\ref{p:formY} and Lemma~\ref{l:L1}.
\end{proof}

\begin{definition}[Trivial $\G$-Action]\labell{d:triv}
A proper right $\G$-space $Y$ has a \textbf{trivial} $\G$-action if the conditions of Proposition~\ref{triv} hold.
\end{definition}

\begin{lemma}\labell{l:fiberprod}
Let $\G$ be a transitive groupoid, fix $b\in\g^0$, and let $X$ and $Y$ be proper right $\G$-spaces.  If the $\G$-action on $Y$ is trivial, then the product  $X \times_{\G^0} Y$ of  Definition~\ref{d:prod} is $\G$-homeomorphic to $X\times Y_b$ with anchor map $(x,y)\mapsto a_X(x)$ and action $(x,y)g=(xg,y)$.
\end{lemma}

\begin{proof}
Assume that $Y$ is a trivial $\G$-space.  By Proposition~\ref{triv}, the map $Y\to Y_b\times\g^0$ sending $y$ to $(yg,b)$, where $g$ is any arrow $b\to a_Y(y)$, is a $\g$-homeomorphism.  Then $X\times_{\g^0} Y$ is $\G$-homeomorphic to $X\times_{\G^0} (Y_b\times\G^0)$.  Recalling that $(y,b')\mapsto b'$ is the anchor map on $Y_b\times\g^0$,
$$X\times_{\G^0}(Y_b\times \G^0) = \{(x,(y,b'))\mid a_X(x)=b'\}.$$   It follows that the map $X\times_{\G^0}(Y_b\times\G^0) \to X\times Y_b$ sending $(x,(y,b'))$ to $(x,y)$ is a $\G$-homeomorphism.  
\end{proof}

We can use the above lemma to define the notion of $\g$-homotopy for maps.  

\begin{definition}[$\G$-Homotopy]\labell{d:htpy} Let $\G$ be a transitive groupoid.  Denote by $\mathbb{I} = I \times \g^0$ the trivial $\g$-space where $I=[0,1]$, and let $f,g\colon X\to Y$ be $\G$-equivariant maps between $\G$-spaces $X$ and $Y$.  Then $f$ is \textbf{$\G$-homotopic} to $g$, denoted $f \sim g$, if there exists a $\g$-equivariant map $H\colon  X \times_{\g^0} \mathbb{I} \to Y$ such that  $H(x, 0 ) = f(x)$ and $H(x, 1) = g(x)$.  \end{definition}

The above results imply the following.

\begin{corollary}\labell{c:htpy}
Let $\G$ be a transitive groupoid with proper target map, fix $b\in\g^0$, and let $f,g\colon X\to Y$ be $\g$-homotopic maps between proper right $\G$-spaces. Then $f|_{X_b}$ and $g|_{X_b}$ are $\gbb$-homotopic maps.
\end{corollary}

\begin{proof}
By Lemma~\ref{l:fiberprod}, $X\times_{\g^0}\mathbb{I}$ can be identified with $X\times I$ with anchor map $a=a_X\circ\pr_1$. By Proposition~\ref{p:rest}, the homotopy $H$ between $f$ and $g$ restricts to a $\gbb$-equivariant homotopy $H\colon  X_b \times I \to Y_b$.
\end{proof}

%%%%%%%%%%%%%%%%%%%%%%%%%%%%%%%%%%%%%%%%%%%%%%%%%%%%%%%%%%%%%%%
\section{Bredon Homology and Cohomology}\label{s:bredon}
%%%%%%%%%%%%%%%%%%%%%%%%%%%%%%%%%%%%%%%%%%%%%%%%%%%%%%%%%%%%%%%

For Lie groups, the orbit category is defined to be the category of transitive $G$-spaces with $G$-equivariant maps between them \cite{Bredon, Lueck-book, mislin-valette}.   In this section we will extend this definition to transitive groupoids.

\begin{definition}[Transitive $\G$-Space]\labell{d:trans gspace}
Let $\G$ be a transitive groupoid, and let $X$ be a $\G$-space.  Then $X$ is a \textbf{transitive} $\G$-space if $X/\G$ is a single point.
\end{definition}

\begin{definition}[Orbit Category]\labell{d:orbit}
Let $\g$ be a transitive groupoid.  Define the \textbf{orbit category of $\G$}, denoted $O\G$, to have objects given by transitive $\G$-spaces, with $\G$-equivariant maps between them.
\end{definition}

We will show that this category is equivalent to the orbit category of the group $\gbb$ for a fixed $b\in\G^0$.  Observe  that $\gub$ is a transitive $\G$-space.  Moreover, if $H$ is a subgroup of $\gbb$, then the quotient $\gub/H$ via the induced left $H$-action is also a transitive $\G$-space.  We will show that every transitive $\G$-space is of this form.  

\begin{lemma}\labell{l:homog spaces}
Let $\G$ be a transitive groupoid with proper target map and $X$ a transitive right $\g$-space.  Fix $x\in X$ and let $b = a_X(x)$.  Define $H=\{g \in \g^1 \mid  xg = x \}$.  Then $X$ is a proper $\G$-space $\g$-homeomorphic to $\gub\!/H$, and $X_b$ is $\gbb$-homeomorphic to $\gbb/H$.
\end{lemma}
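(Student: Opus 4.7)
The plan is to define the orbit map $\varphi\colon\gub\to X$ by $\varphi(g)=xg$, show it descends to a $\g$-equivariant continuous bijection $\bar\varphi\colon\gub/H\to X$, and then upgrade this to a homeomorphism using compactness; the second claim will follow by restricting $\varphi$ to $\gbb$.

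First I would observe that $H$ is automatically contained in $\gbb$: since $xg$ is defined only when $t(g)=a_X(x)=b$, and the identity $xg=x$ forces $s(g)=a_X(xg)=b$, every element of $H$ has $s(g)=t(g)=b$.  Routine checks using the axioms of a right action then show $H$ is a subgroup of $\gbb$.  Consequently $H$ acts on $\gub$ by the restriction of the left $\gbb$-action of Remark~\ref{x:left action}, and the quotient $\gub/H$ inherits a right $\g$-action (commuting with the left $H$-action) with anchor map induced by $s$; similarly $\gbb/H$ becomes a right $\gbb$-space.

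Next I would verify that $\varphi(g)=xg$ is a continuous, surjective, $\g$-equivariant map: continuity is by restriction of the action map $X\ftimes{a_X}{t}\g^1\to X$ to $\{x\}\times\gub$; $\g$-equivariance is associativity of the action; and surjectivity uses transitivity of $X$ — for any $y\in X$ there exists $g\in\g^1$ with $xg=y$, and necessarily $t(g)=b$, so $g\in\gub$.  I would then compute the fibers: if $\varphi(g)=\varphi(g')$ for $g,g'\in\gub$, then comparing anchor maps gives $s(g)=s(g')$, so $g'g^{-1}$ is defined and lies in $\gbb$, and
$$x(g'g^{-1}) = (xg')g^{-1} = (xg)g^{-1} = x,$$
so $g'g^{-1}\in H$ and $g'\in Hg$.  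The converse is immediate from associativity.  Thus $\varphi$ descends to a continuous $\g$-equivariant bijection $\bar\varphi\colon\gub/H\to X$.

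Finally, since $t$ is proper, $\gub$ is compact, hence so is $\gub/H$; since $X$ is Hausdorff, the continuous bijection $\bar\varphi$ is a homeomorphism.  For the second claim, restricting $\varphi$ to $\gbb\subset\gub$ yields a continuous $\gbb$-equivariant map into $X_b$ (the image lands in $X_b$ because $a_X(xg)=s(g)=b$ for $g\in\gbb$); the identical fiber and compactness arguments then give a $\gbb$-homeomorphism $\gbb/H\simeq X_b$.  The one point that requires care is the bookkeeping with sides of cosets and composition conventions — verifying that the fibers of $\varphi$ are exactly the left $H$-orbits $Hg$ on $\gub$ — but no substantive obstacle arises, as the argument is a direct translation of the classical orbit-stabilizer correspondence to the transitive groupoid setting.
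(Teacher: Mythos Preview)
Your proposal is correct and follows essentially the same approach as the paper: define the orbit map $\varphi\colon\gub\to X$, $\varphi(g)=xg$, check it descends to a $\G$-equivariant continuous bijection $\gub/H\to X$, and conclude via the compactness of $\gub$ (from properness of $t$) and Hausdorffness of $X$. The only minor difference is that the paper dispatches the second claim in one line by observing $(\gub/H)_b\simeq\gbb/H$, whereas you rerun the argument after restricting $\varphi$ to $\gbb$; both are fine.
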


\begin{proof}
Define $\varphi\colon\gub\to X$ by $\varphi(f)=xf$.  Since $t(f)=b=a_x(x)$, this is well-defined.  Since the $\g$-action is continuous, $\varphi$ is continuous.  Since $\varphi(hf)=\varphi(f)$ for all $h\in H$, $\varphi$ descends to a continuous map $\widetilde{\varphi}\colon \gub/H\to X$.

It is straightforward to check that $\widetilde{\varphi}$ is injective, and surjectivity follows from the transitivity of the action.  It follows from the definitions that $\varphi$ is $\g$-equivariant, and this property descends to $\widetilde{\varphi}$ since the actions of $\gbb$ and $\g$ commute.

We have shown that $\widetilde{\varphi}$ is a continuous bijective $\g$-map.  Since $\gub$ (and hence $\gub/H$) is compact, and $X$ is Hausdorff, we conclude that $\widetilde{\varphi}$ is a $\G$-homeomorphism.  Properness follows from the fact that $t$ is proper, hence $\gub/H$ and $X$ are compact.
The last statement follows from the first since $(\gub/H)_b=\gbb/H$.
\end{proof}

The following corollary uses the topologies on the mapping spaces as in Proposition \ref{p:rest}.

\begin{corollary}\labell{c:2} 
For any proper $\G$-space $X$, the spaces $\Map_{\G}(\gub/H,X)$, $\Map_{\gbb}(\gbb/H,X_b)$, and $X_b^H$ are homeomorphic to each other.
\end{corollary}

\begin{proof}
The homeomorphism between the first two follows from Proposition~\ref{p:rest}.  The homeomorphism between the last two is a standard result from equivariant homotopy theory for actions of groups. 
\end{proof}

\begin{corollary}\labell{c:1}
The orbit category $O\g$ is equivalent to the orbit category $O\gbb$.
\end{corollary}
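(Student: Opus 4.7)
The plan is to exhibit the isomorphism by the restriction-to-fibre functor $F\colon O\G \to O\gbb$ defined on objects by $F(X) = X_b$ and on a $\G$-equivariant map $\varphi\colon X\to Y$ by $F(\varphi) = \varphi|_{X_b}$. First I would verify that $F$ is well-defined: if $X$ is a transitive $\G$-space, then for any $x,x'\in X_b$ transitivity gives some $g\in\G^1$ with $xg=x'$, and since $a_X(x)=a_X(x')=b$ forces $t(g)=s(g)=b$, we have $g\in\gbb$, so $X_b$ is transitive as a $\gbb$-space. Also, $\G$-equivariance of $\varphi$ together with $a_Y\circ\varphi = a_X$ forces $\varphi(X_b)\subseteq Y_b$ and the restriction is automatically $\gbb$-equivariant. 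Functoriality (preservation of composition and identities) is immediate from the definition.

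Next I would show $F$ is fully faithful. This is exactly the content of Proposition~\ref{rest}, which provides a bijection $\Map_{\G}(X,Y)\simeq\Map_{\gbb}(X_b,Y_b)$ realised by restriction, i.e.\ by $F$ itself.

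Then I would establish essential surjectivity using the Lemma preceding Corollary~\ref{c:2}: every transitive $\gbb$-space is $\gbb$-homeomorphic to $\gbb/H$ for some subgroup $H\le\gbb$, and the same Lemma gives $(\gub/H)_b\simeq\gbb/H$ with $\gub/H$ itself a transitive $\G$-space (so indeed an object of $O\G$). Hence every object of $O\gbb$ lies in the essential image of $F$. An explicit pseudo-inverse functor is $Y\mapsto Y\times_{\gbb}\gub$, with the natural isomorphism to the identity on $O\G$ supplied by Proposition~\ref{formY}.

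The main obstacle, and really the only point requiring care, is what is meant by \emph{isomorphic} categories here. The argument above gives an equivalence; to upgrade it to a strict isomorphism I would replace both categories by their natural skeleta indexed by subgroups $H\le\gbb$, taking $\gub/H$ as the distinguished representative on the $\G$-side and $\gbb/H$ on the $\gbb$-side. The functor $\gub/H\mapsto\gbb/H$ is then literally a bijection on objects, and is a bijection on hom-sets via Corollary~\ref{c:2}, whose two halves already package the object-level and morphism-level identifications needed.
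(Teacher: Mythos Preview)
Your proposal is correct and follows essentially the same approach as the paper, which gives no explicit proof for this corollary but intends it to follow immediately from the preceding Lemma (every transitive $\G$-space has the form $\gub/H$ with fibre $\gbb/H$) together with Proposition~\ref{rest} and Corollary~\ref{c:2}. You have in fact been more careful than the paper: your discussion of the isomorphism-versus-equivalence distinction and the passage to skeleta addresses a point the paper leaves implicit.
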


We can now use this to define $\g$-CW-complexes.   We follow the presentation of May \cite[I.3]{May} and \cite[Definition 1.1]{Lueck-survey} in defining $G$-CW-complexes for group actions.  

Let  $D^{n+1}$ be the closed $(n+1)$-disk with boundary the $n$-sphere $S^n$.  Fix $b\in \g^0$.  Given a closed subgroup $H$ of $\gbb$, an \textbf{$(n+1)$-cell} is the product of a $\g$-space $\gub/H$ with a trivial $\g$-space $D^{n+1}\times\g^0$; we identify this product with $\gub/H\times D^{n+1}$ (see Lemma~\ref{l:fiberprod}).  The \textbf{boundary} of this $(n+1)$-cell is similarly constructed, identified with $\gub/H\times S^n$, with $\g$-equivariant inclusion $j\colon\gub/H\times S^n\hookrightarrow \gub/H\times D^{n+1}$.

\begin{definition}[$\G$-CW-Complex]\labell{d:cw}  
Let $\g$ be a transitive groupoid with proper target map, and fix $b\in\g^0$.  A \textbf{$\G$-CW-complex} (with finite skeleta) is a right $\g$-space $X$ constructed inductively as an increasing union of skeleta 
\[ X^0 \subseteq X^1 \subseteq X^2 \subseteq \ldots \subseteq X^n\subseteq X^{n+1}\subseteq \ldots;\quad X=\bigcup_{n \in \N} X^n\]
  as follows:
\begin{itemize}
\item The set of \textbf{$0$-cells}, $X^0$, is a finite disjoint union of \textbf{canonical orbits} $$\coprod_{i\in I_0} \gub/H_i,$$ where $H_i$ is a  closed subgroup of $\gbb.$

\item 
We inductively construct the $(n+1)$-skeleton $X^{n+1}$ from the  $n$-skeleton $X^{n}$ of $X$ as follows: Given a finite collection of $(n+1)$-cells $\{\gub/H_{i}\times D^{n+1}\}_{i\in I_n}$ and a collection of $\g$-equivariant \textbf{attaching maps} $\{q_i^n\colon \gub/H_{i}\times S^n\to X^n\}_{i\in I_n}$, the \textbf{$(n+1)$-skeleton} $X^{n+1}$ of $X$ is the pushout of $\g$-spaces (see Definition~\ref{d:po}) as indicated in the diagram below, where we denoted by $j_i\colon\gub/H_i\times S^n\to \gub/H_i\times D^{n+1}$ the standard inclusion.  
$$
\xymatrix{
	\displaystyle{\coprod_{i\in I_{n}}} \gub/H_{i} \times S^n \ar[rr]^{\quad\quad\underset{i}{\coprod}\, q_i^n} \ar[d]^{\underset{i}{\coprod}\, j_i} & & X^n \ar[d] \\ 
	\displaystyle{\coprod_{i\in I_{n}}} \gub/H_{i} \times D^{n+1} \ar[rr]_{~~\quad\quad\underset{i}{\coprod}\, q_i^{n+1}} & & X^{n+1}\\}
$$
Let $i_n\colon X^n\to X^{n+1}$ be the inclusion map, and $\displaystyle{\coprod_i} Q_i^n $ the pushout map
$$\coprod_i Q_i^n\colon \coprod_{i\in I_{n}} \gub/H_{i} \times D^{n+1} \to X^{n+1}.$$
\item Finally, we define $X$ to be the colimit (in the category of spaces) of the collection of $n$-skeleta $X^n$ with their inclusion maps.
\end{itemize}
\end{definition}

\begin{remark}\labell{r:cw}
The fact that a $\g$-CW-complex $X$ is a right $\g$-space follows from the fact that the inclusion maps $i_n\colon X^n\to X^{n+1}$ are $\g$-maps, which is immediate from the definition of the pushout of $\g$-spaces.  Our requirement of finite skeleta implies that each $n$-skeleton is a proper $\G$-space.
\end{remark}

The convenience of this definition of $\g$-CW-complex is illustrated by the following version of Whitehead's Theorem for $\G$-spaces (see \cite{May}). 

\begin{theorem}\labell{t:whitehead}  Let $\g$ be a transitive groupoid with proper target map, and fix $b\in\g^0$.  A $\G$-map $f\colon  X \to Y$ between finite $\G$-CW-complexes is a $\G$-homotopy equivalence if and only if $f^H\colon  X_b^H \to Y_b^H$ is a weak equivalence for all closed subgroups $H$ of $\gbb$.   
\end{theorem}

\begin{proof}
The $\gbb$-version of Whitehead's Theorem states that a $\gbb$-map $f_b\colon X_b\to Y_b$ between $\gbb$-CW-complexes is a $\gbb$-homotopy equivalence if and only if the restrictions $f_b^H\colon X_b^H\to Y_b^H$ are weak equivalences for all closed subgroups $H$ of $\gbb$.  Thus, the  groupoid version of the theorem follows from Remark~\ref{r:cw} and Corollary~\ref{c:htpy}.
\end{proof}

We now connect $\g$-CW-complexes to $\gbb$-CW-complexes via the following proposition.

\begin{proposition}\labell{P:cw}
Let $\g$ be a transitive groupoid with proper target map, and fix $b\in\g^0$.  Then $X$ is a $\g$-CW-complex if and only if $X_b$ is a $\gbb$-CW-complex.
\end{proposition}

\begin{proof}
At each stage of the attachment, the map between proper $\G$-spaces $\coprod\gub /H \times S^n \to X^n$ is determined by a $\gbb$-equivariant map $\gbb/H \times S^n \to X^n_b$ by Proposition \ref{p:rest}, and the pushout of $\g$-spaces restricted to the fiber $X^{n+1}_b$ is given by the following $\gbb$-equivariant pushout.
$$
\xymatrix{
\coprod\gbb/H \times S^n \ar[r] \ar[d] & X^n_b \ar[d] \\ 
\coprod\gbb/H \times D^{n+1} \ar[r] & X^{n+1}_b\\}
$$
The result follows.
\end{proof}

In fact, the adjunctions used above show a stronger correspondence:  the $\G$-CW-cellular structure of a $\G$-CW-complex $X$ exactly corresponds to the $\gbb$-CW-cellular structure of the fiber $X_b$.   Furthermore, these adjunctions induce a bijection between the cellular structures of a $\G$-CW-complex $X$ and those of the $\gbb$-CW-complex $X_b$.\footnote{The authors thank the anonymous referee for pointing this out.}  This allows us to immediately generalize some of the constructions of \cite{May} to actions of transitive groupoids.  

Following \cite[Chapter 1, Section 4]{May}, note that any $\g$-space $X$ defines a contravariant functor $\Phi_X\colon  O\G \to \Top$, defined by $\Phi_X(Y) = \Map_{\G}(Y, X)$ for any transitive $\g$-space $Y$.

\begin{definition}[$\G$-Coefficient System]\labell{d:coeff system}
Let $\g$ be a transitive groupoid with proper target map, and fix $b\in\g^0$.  A contravariant (resp.\ covariant) \textbf{$\g$-coefficient system} is a contravariant (resp.\ covariant) functor from the orbit category $O\G$ to the category of abelian groups.
\end{definition}

For a $\g$-CW-complex $X$ with skeleta $X^n$, we define a functor 
$$\underline{C}_n(X) = \underline{H}_n( \Phi_{X^n}(\cdot),  \Phi_{X^{n-1}}(\cdot);  \Z),$$
whose value on $\gub/H$ is the relative homology $H_n(\Phi_{X^n}(\gub/H),\Phi_{X^{n-1}}(\gub/H);\Z)$.  This creates a contravariant $\g$-coefficient system.

By the transitivity of $\g$ and Corollary~\ref{c:2}, and the fact that $X^n$ is a proper $\G$-space for all $n$, 
$$\underline{C}_n(X)(\gub/H) = H_n( (X_b^n)^{H}, (X_b^{n-1})^{H}; \Z).$$
Note that as a functor, $\underline{C}_n(X)$ sends all of the maps in $O\G$ to maps between relative homology groups.  For example, the maps of $\gub/\{e\}$ to itself can be identified with $\gbb$, and so this induces an action of $\gbb$ on the corresponding relative homology group for the subgroup $\{e\}$.  

Recall the long exact sequence for relative homology groups using the triples $((X_b^n)^H,(X_b^{n-1})^H,(X_b^{n-2})^H)$, whose connecting homomorphisms induce natural transformations $d\colon\underline{C}_n(X)\to\underline{C}_{n-1}(X)$.
Using these, we now define the Bredon cohomology and homology of the $\G$-CW-complex $X$.   

\begin{definition}[Bredon Cohomology]\labell{d:bredon-cohom}
Let $\g$ be a transitive groupoid with proper target map, and fix $b\in\g^0$. Let $X$ be a $\g$-CW-complex and $M$ a contravariant $\G$-coefficient system. Define
\begin{equation*}\labell{eq-cell-compl-bred-cohomo}
C_{\G}^n(X, M) := \Hom _{O\G} \Big( \underline{C}_n(X), M \Big),\ \delta:=\Hom_{O\G} (d, \id_M ).
\end{equation*}
where $\Hom_{O\g} $ denotes natural transformations between the functor coefficient systems.  This induces a complex, whose homology is the \textbf{Bredon cohomology} of $X$ with coefficients in $M$.
\end{definition}
  
\begin{definition}[Bredon Homology]\labell{d:bredon-hom}
Let $\g$ be a transitive groupoid with proper target map, and fix $b\in\g^0$. Let $X$ be a $\g$-CW-complex and $N$ a covariant $\G$-coefficient system. Define
\begin{equation*}
\label{eq-cell-compl-bred-homo}
C^{\G}_n(X, N) := \Big( \underline{C}_n(X)  \Big) \otimes_{O\G}  N,\ \partial := d\otimes 1.
\end{equation*}
This induces a complex, whose homology is the \textbf{Bredon homology} of $X$ with coefficients in $N$.
\end{definition}

Thus we have defined Bredon cohomology and homology for $\g$-CW-complexes, analogous to the group case.  But by Corollary~\ref{c:1}, the indexing category $O\g$ is equivalent to $O\gbb$.  Therefore our theory for a $\G$-space $X$ will coincide with the $\gbb$-equivariant theory of the fiber $X_b$.    This, combined with Corollary~\ref{c:htpy} and the fact that the $\gbb$-theory is invariant under $\gbb$-homotopy \cite[I.4]{May}, also implies that these homology and cohomology theories will be invariant under $\g$-homotopy.

\begin{remark} \label{th:obstruction}  When $\g$ is proper Lie, our definition of Bredon cohomology also allows the development of  an obstruction theory for $\g$-spaces. For a $\g$-CW complex, the attaching maps are determined by $\gbb$-equivariant maps of fibers $\gbb/H \times S^n \to X_b$.  Given a map $f\colon  X^{n} \to Y$, we can compose the attaching maps with $f$ to create a cocycle $c_f \in C^{n+1}(X_b; \pi_{n}(Y_b))$, whose associated cohomology class will vanish if and only if the map $f$ can be extended to $X^{n+1}$ without changing it on $X^{n-1}$.  See \cite{May} for more details.
\end{remark}

%%%%%%%%%%%%%%%%%%%%%%%%%%%%%%%%%%%%%%%%%%%%%%%%%%%%%%%%%%%%%%%%%%%%%%%%%
\subsection{Examples}\labell{ss:examples}
%%%%%%%%%%%%%%%%%%%%%%%%%%%%%%%%%%%%%%%%%%%%%%%%%%%%%%%%%%%%%%%%%%%%%%%%%

We now  give examples of calculating Bredon cohomology for $\G$-spaces.

\begin{example}\labell{X:quotient}
Let $\g$ be a transitive groupoid with proper target map, and fix $b\in\g^0$.  Assume $\gbb$ is finite.  Let $X$ be a $\g$-CW-complex, and let $M = \underline{\mathbb{Z}}$ denote the coefficient system with values $M(\g^b/H) = \mathbb{Z}$ for all $H \subseteq \gbb$, with all structure maps given by the identity.   Consider the complex defined by  $\Hom_{O\G} \Big( \underline{C}_n(X), M \Big)$.   The diagram $\underline{C}_n(X)$ will have structure maps from $ \underline{C}_n(X)(\gbb/H)$  to itself generated by the action of $\gbb$, but since  the structure maps of $M$ are the identity, any functor from $\underline{C}_n(X)$ to $M$ will factor through the  quotient, and $\Hom_{O\G} \Big( \underline{C}_n(X), M \Big)$ will be isomorphic to $\Hom \Big(C_n(X/\G), \mathbb{Z}\Big)$.   Then the Bredon cohomology of $X$ with coefficients given by $M$ is isomorphic to the singular homology of $X/\G$ with coefficients in $\mathbb{Z}$.  
\end{example}

\begin{example}\labell{X:dbl}  We consider the transitive groupoid $\G$ given as an example in \cite{sieben}:  $\g^0 = \{ a, b\}$ and   $\G^1  = \{x,y, x^{-1}, y^{-1}, u =  i(a),v = i(b),s =  y^{-1}x,t = yx^{-1} \}$ with relations $s^2=u$ and $t^2=v$, where the structure is pictured as follows:  

$$ \xymatrix{ b \ar@(ul,ur)^{v}  \ar@(dl,dr)_{t}   \ar@/^/[rrr]^{x^{-1}} \ar@/^1.3pc/[rrr]^{y^{-1}} && & a \ar@/^/[lll]^x \ar@/^1.2pc/[lll]^y   \ar@(ul,ur)^{u}  \ar@(dl,dr)_{s}}  $$

This is a finite groupoid equipped with the discrete topology.  Recall that the $n$-cells for a $\g$-CW-complex are given by $\gub / H \times D^n$ for $H \leq \gbb$.   In our case,  $\gbb = \{ v, t\} \cong  \mathbb{Z}/2$, and $\gub = \{v, t, x, y\}$.  We will construct $X$ as follows: start with two $0$-cells   $X^0 = \gub/ \gbb \times D^{0} \coprod \gub/ \gbb \times D^{0}  $.  Each $0$-cell is two discrete points; we will denote the points of the first cell by $\blacksquare_a$ and $\blacksquare_b$, and the points of the second by $\blacktriangle_a$ and $\blacktriangle_b$, where the subscript denotes the image of the point under the anchor map.   We attach a single $1$-cell $\gub/\{e\} \times D^{1}$, which is  $4$ lines, via the $\g$-map that takes $\gub/\{e\} \times S^{0}$ to $X^0$ such that $\gub/\{e\} \times \{-1\} \to \{\blacksquare_a,\blacksquare_b\}$, and $\gub/\{e\} \times \{1\} \to \{\blacktriangle_a,\blacktriangle_b\}$.  The result is depicted as follows:  

$$ \xymatrix{ \blacksquare_a  \ar@/^1.2pc/@{-}[r]  \ar@/_1.2pc/@{-}[r] & \blacktriangle_a  & \blacksquare_b  \ar@/^1.2pc/@{-}[r]  \ar@/_1.2pc/@{-}[r] & \blacktriangle_b  }  $$
\vspace{0.125in}

Since $\g$ is discrete, this is just two copies of the fiber $X_b$, which is the $\ZZ/2$-space given by reflections of the circle $S^1$ through a fixed axis.

\end{example}

\begin{example}  \labell{X:hol} We construct a continuous version of the space described in Example \ref{X:dbl}.    Following \cite{crainic}, starting with the principal $\ZZ/2$-bundle given by the double cover $S^1\to S^1/(\ZZ/2)$, we construct a transitive groupoid $\g$ with $\g^0=S^1/(\ZZ/2)$ and $\g^1=(S^1 \times S^1)/(\mathbb{Z}/2)$, the orbit space of the diagonal $\ZZ/2$-action on $S^1\times S^1$ in which $[z,w]=[-z,-w]$.   The source and target maps are the second and first projection maps, respectively, and composition is defined by $[z_1,w_1][z_2,w_2] = [z_1,w_2]$.  Fixing $b \in S^1$, we have $\gbb \cong \mathbb{Z}/2$.  

We construct a $\g$-space $Y = Y_b \times_{\gbb} \gub$ by taking $Y_b$ to be the same fiber as in Example \ref{X:dbl}, the circle with action by reflections through a fixed axis.  The result is homeomorphic to the Klein bottle, with $\G$-CW-structure modeled on the $\mathbb{Z}/2$-structure of the fiber, having two $0$-cells created by $\gub/\gbb \times D^0$, both homeomorphic to the circle with trivial $\Z/2$-action, and one $1$-cell $\gub/\{e\} \times D^1$, homeomorphic to a cylinder equipped with an action of $\Z/2$ induced by rotation by $\pi$ around the circle $\gub/\{e\}$.

\end{example}

\begin{example}\labell{X:bred}  We  use the $\g$-CW-structure of Examples \ref{X:dbl} and \ref{X:hol} to calculate Bredon cohomology for a couple of coefficient systems.  Both of these spaces will have orbit categories equivalent to $\G^b/\{e\} \to \G^b/\gbb$ with a $\mathbb{Z}/2$ action on $\G^b/\{e\}$, and their  $\g$-CW-complexes are constructed with cells which correspond:   $\underline{C}_0(X) = (\mathbb{Z} \oplus \mathbb{Z} \xrightarrow{~\operatorname{id}~}  \mathbb{Z} \oplus \mathbb{Z})$, with identity map and trivial $\mathbb{Z}/2$-action,  and $\underline{C}_1(X) = (0 \hookrightarrow \mathbb{Z} \oplus \mathbb{Z})$ with a  non-trivial action of $\mathbb{Z}/2$ that swaps the copies of $\mathbb{Z}$.   

If we consider the coefficient system $\underline{A} = \mathbb{Z} \to 0$,  we calculate that $\Hom_n(\underline{C}_n(X), \underline{A}) $ is given by the chain complex $ \mathbb{Z} \oplus \mathbb{Z}  \rightarrow  0$.   In this case,  the Bredon cohomology recovers the cohomology of the fixed set $X_b^{\mathbb{Z}/2}$.    If we consider the cofficient system $\underline{B} =  \mathbb{Z} \xrightarrow{~\operatorname{id}~} \mathbb{Z}$, we find that $\Hom_n(\underline{C}_n(X), \underline{B}) $  is given by the chain complex $ \mathbb{Z} \oplus \mathbb{Z}  \twoheadrightarrow  \mathbb{Z} $  and we have recovered the cohomology of the quotient space $X/\G$, homeomorphic to $X_b/\gbb$.  
\end{example}

%% %% %%
\subsection{Bredon Homology and Smith Theory}\labell{ss:apps}
%% %% %%

We will now derive some applications of Bredon homology to  Smith theory. In this section, all spaces are assumed to be finite CW-complexes (resp.\ finite $\g$-CW-complexes, finite  $\gbb$-CW-complexes). The following result, which is an adaptation of \cite[Remark 3.4]{deaconu}, tells us when $X^\G \not= \emptyset$. Its proof is similar to the proof of \cite[Remark 3.4]{deaconu}, and therefore omitted.

\begin{proposition}\label{prop:fixed-point-set-transitive-gpd}
Let $\G$ be a transitive groupoid and $X$ a $\G$-space.  Then $X^\G \not= \emptyset$
if and only if  the anchor map $a_X\colon X \to \G^0$ has a $\G$-equivariant section $\sigma\colon \G^0 \to X$ (\emph{i.e.}\ $\sigma(gu)=g\sigma(u)$.) 
\end{proposition}

\begin{definition}[Multiplicative Euler Characteristic]\labell{d:multiplicative}
Let $\pi\colon X\to Y$ be a fibration with generic fiber $F$ and let $\chi$ denote the Euler characteristic with respect to cellular homology with coefficients in a group $H$. We say that $\pi$ has \textbf{multiplicative $\chi$-Euler characteristic} if $\chi(X)=\chi(F)\chi(Y).$
\end{definition}

\begin{example}\labell{x:multiplicative} 
Let $\pi\colon X\to Y$  be a locally trivial fibration with fiber $F$ over a path-connected space $Y$. Suppose for a prime number $p$ the induced action of $\pi_1(Y)$ on $H_*(F;\mathbb{Z}/p\mathbb{Z})$ is trivial.  Then $\pi $ has  multiplicative $\chi_p$-Euler characteristic, where $\chi_p$ is the Euler characteristic with respect to mod-$p$ homology; see \cite[Theorem 9.16]{davis-kirk}.  For instance, the Hopf fibration $\pi\colon S^3\to S^2$ satisfies $\chi_p(S^3)=\chi_p(S^1)\chi_p(S^2)$ for any prime $p$. 
\end{example}

If we assume that anchor maps $a_X\colon X \to \G^0$ and $a_{\G}:=(a_X)|_{X^\G}\colon X^\G \to \G^0$ are fibrations having {$\chi_p$}-multiplicative Euler characteristics, then the results of \cite{putman} readily extend to transitive groupoid actions. Note that the results of \cite{putman} extend to the category of $\gbb$-CW complexes by using  the main result in \cite{May-Smith} in place of \cite[Theorem B]{putman}.

\begin{theorem}
\label{them:A-Smith-theory} (\emph{c.f.}\ \cite[Theorem A]{putman}) Let $\g$ be a transitive groupoid with proper target map and $\g^0$ path-connected; fix $b\in\g^0$.  Assume that $\gbb$ is a finite $p$-group for some prime number $p$.  Let $X$ be a $\g$-CW-complex with anchor map $a_X$. Suppose that  $a_X$ and  $a_{\G}:=a_X|_{X^\G}$ are fibrations with multiplicative $\chi_p$-Euler characteristics.  We have:

\begin{enumerate}
\item The Euler characteristics $\chi_p(X)$ and $\chi_p(X^\g)$ are integers, and 
$$ \chi_p( X ) \equiv \chi_p\left( X^\G \right)~\mod p.$$

\item  If the fibers of $a_X$ are mod-$p$ acyclic, then the fibres of $a_\g$ are mod-$p$ acyclic and hence non-empty; in particular, $X^\g$ is non-empty.

\item  If the fibers of $a_X$ are mod-$p$ homology $n$-spheres, then either there exists a non-negative integer $m\leq n$ such that the fibres of $a_\g$ are homology $m$-spheres, or $X^\g$ is empty.
 \end{enumerate}
\end{theorem}

\begin{proof} By Proposition~\ref{P:cw}, $X_b$ is a finite $\gbb$-CW-complex, and hence so is $X_b^\gbb$.  By \cite[Theorem A]{putman}, 
\begin{equation}\labell{eq:euler-char}
\chi_p(X_b)\equiv\chi_p\left(X_b^\gbb\right)~\mod p.
\end{equation}
Since $a_X$ and $a_\g$ have multiplicative Euler characteristics, it follows from Equation~\eqref{eq:euler-char} that
$$\chi_p(X)= \chi_p(X_b)\chi_p(\G^0 )\ \equiv\  \chi_p\left(X_b^\gbb\right)\chi_p(\G^0) = \chi_p(X^\G)~\mod p.$$
This proves the first claim.

To prove the second claim, suppose the fibers of $a_X$ are mod-$p$ acyclic.  Then $X_b$ is mod-$p$ acyclic.  It follows from \cite[Theorem A]{putman} that $X_b^\gbb$ is mod-$p$ acyclic.  The result follows.  The third claim is similar, keeping in mind that the fibers of $a_\g$ are homeomorphic and the fibers of $a_X$ are homeomorphic.
\end{proof}

By putting together Proposition \ref{prop:fixed-point-set-transitive-gpd} and Theorem \ref{them:A-Smith-theory},  we obtain:

\begin{corollary}\labell{cor:Smith-Theory} Keeping the hypothesis of Theorem~\ref{them:A-Smith-theory}, if a fiber of $a_X$ is mod-$p$ acyclic, then $a_X$ admits a $\g$-equivariant section.
\end{corollary}

%% %% %%

%%%%%%%%%%%%%%%%%%%%%%%%%%%%%%%%%%%%%%%%%%%%%%%%%%%%%%%%%%%%%%%%%%%%%%%%%%
\section{Principal $\G$-bundles and Classifying Spaces}\label{s:princ bdle}
%%%%%%%%%%%%%%%%%%%%%%%%%%%%%%%%%%%%%%%%%%%%%%%%%%%%%%%%%%%%%%%%%%%%%%%%%%

In this section, we look at principal $\g$-bundles for transitive groupoids $\g$ with mild conditions.  We begin by recalling the definition (see \cite[p.\ 11]{Jelenc}).  

\begin{definition}[Principal $\G$-Bundle]\labell{d:princ bdle}
Let $\G$ be a groupoid and $M$ a Hausdorff paracompact space. A \textbf{principal $\G$-bundle} over $M$ is a paracompact space $P$ equipped with a surjective map $\pi\colon P \to M$ and a right $\G$-action with anchor map $\epsilon\colon P \to \G^0$ such that
\begin{enumerate}
	\item  the map $\pi$ has local sections,
	\item  for every $p \in P$ and $g \in \G^1$ with $\epsilon(p)=t(g)$, we have $\pi(pg)=\pi(p)$, and
	\item  the following map is a homeomorphism:
	$$\mu \colon P \ftimes{\epsilon}{t} \G^1 \to P \times_{M}\!P\colon (p,g) \mapsto (p,pg).$$
\end{enumerate}
\end{definition}

If $\G=H$ is a group, Definition \ref{d:princ bdle} reduces to the standard definition of a principal $H$-bundle in which the third condition is replaced by the condition that the group acts freely and transitively on the fibres; see \cite[Definition 2.1]{rossi} for the Lie group case, which also holds in the topological category.

Since $\pi$ has local sections and is therefore a quotient map, the space $M$ above is homeomorphic to the orbit space $P/\G$. We will not require the properness of $\epsilon$ for most of this section, as it turns out that the benefits of compactness are replaced by the benefits of Conditions~(1) and (3).  Recall that Proposition~\ref{p:formY} holds when local sections replace the properness condition (see Remark~\ref{i:formP}).  We now show that Proposition~\ref{p:rest} also has an analogous version for principal bundles.  We will need the following lemma.

\begin{lemma}\labell{l:pi proper}
Let $H$ be a compact topological group and let $\pi\colon P\to M$ be a principal $H$-bundle.  Then $\pi$ is proper.
\end{lemma}

\begin{proof}
Fix a compact set $K\subseteq M$.  Let $\{U_\alpha\}$ be an open cover of $K$ such that for each $\alpha$, $\pi^{-1}(U_\alpha)$ is $H$-homeomorphic to $U_\alpha\times H$.  Since $M$ is paracompact, there exists a refinement $\{V_\alpha\}$ of $\{U_\alpha\}$ such that $\overline{V_\alpha}\subseteq U_\alpha$ for each $\alpha$ (see \cite[Lemma 41.6]{munkres}).  Since $K$ is compact, there is a subcover $\{V_{\alpha_1},\dots, V_{\alpha_n}\}$.  Since $\pi^{-1}(K\cap\overline{V_{\alpha_i}})$ is homeomorphic to $(K\cap\overline{V_{\alpha_i}})\times H$, which is compact for each $i$, and finite unions of compact sets are compact, we conclude that $\pi^{-1}(K)=\bigcup_i\pi^{-1}(K\cap\overline{V_{\alpha_i}})$ is compact.
\end{proof}

\begin{lemma}\labell{i:rest}
Let $\G$ be a transitive groupoid with compact stabilizers, fix $b\in\g^0$, and suppose that $t|_{\glb}\colon\glb\to\g^0$ admits local sections.
 Let $\pi\colon P\to M$ and $\rho\colon Q\to N$ be principal $\g$-bundles with anchor maps $\epsilon_P$ and $\epsilon_Q$, respectively, such that $P$ is paracompact and $b\in\epsilon_P(P)\cap\epsilon_Q(Q)$.  There is a homeomorphism from $\Map_\g(P,Q)$ to $\Map_\gbb(P_b,Q_b)$ that is natural in each variable.

\end{lemma}

\begin{proof}
All arguments in the proof of Proposition~\ref{p:rest} carry through except the argument showing that the map $q$ is continuous.  In particular, we need to show that the quotient map $\pi_P\colon P_b\times\gub\to P_b\times_\gbb\gub$ is proper.  By Remark \ref{i:formP}, $P$ is homeomorphic to $P_b\times_{\gbb}\gub$, and so by Lemma~\ref{l:pi proper}, it is enough to show that $\pi_P$ is a principal $\gbb$-bundle, which we now show.

Fix $[p,g]\in P_b\times_\gbb\gub$, and let $\epsilon$ be the anchor map of the principal $\g$-bundle $P_b\times_\gbb\gub\to M$.  Let $V$ be an open neighborhood of $\epsilon([p,g])$ for which there exists a local section $\tau\colon V\to t|_{\glb}^{-1}(V)$  of $t|_{\glb}$.  Then the map $\sigma\colon\epsilon^{-1}(V)\to\pi_P^{-1}(\epsilon^{-1}(V))$ defined by
\[
 [p',g']\mapsto (p'g'\tau(\epsilon(p'g')),\tau(\epsilon(p'g'))^{-1})\]
is a well-defined continuous section of $\pi_P|_{\pi_P^{-1}(\epsilon^{-1}(V))}$.  Condition~(2) is straightforward to check, and Condition~(3) follows from the fact that $\pi\colon P\to M$ is a principal $\g$-bundle.
\end{proof}

\begin{proposition}\labell{p:restric-princ-bundle}  
Let $\G$ be a transitive groupoid, fix $b\in\g^0$, and suppose that $t|_{\glb}\colon\glb\to\g^0$ admits local sections. If $\pi\colon P\to M$ is a principal $\G$-bundle with anchor map $\epsilon$ such that $b\in\epsilon(P)$, then the restriction $\pi|_{P_b}\colon P_b\to M$ is a principal $\gbb$-bundle.  Conversely, if $\pi_b\colon P_b\to M$ is a principal $\gbb$-bundle, then we can extend $\pi_b$ to a principal $\G$-bundle $\pi'\colon P'\to M$.  Moreover, these operations are inverses of each other up to isomorphisms of principal bundles.
\end{proposition}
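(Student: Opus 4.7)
My plan is to verify the three conditions of Definition \ref{d:princ bdle} in each direction, and then show that the two constructions are mutually inverse via Proposition \ref{formY}. For the forward direction, assume $\pi\colon P\to M$ is a principal $\G$-bundle. By Remark \ref{r:anchor fibres}, $P_b:=\epsilon^{-1}(b)$ carries a right $\gbb$-action, and by Proposition \ref{p:quots}, $\pi|_{P_b}$ factors through a homeomorphism $P_b/\gbb \simeq P/\G \simeq M$, giving both surjectivity and $\gbb$-invariance of $\pi|_{P_b}$. The homeomorphism condition for $\mu_b\colon P_b\times\gbb \to P_b\times_M P_b$ is obtained as a restriction of $\mu$: if $(p,h)\in P_b\times\gbb$ then both $\epsilon(p)=b=t(h)$ and $\epsilon(ph)=s(h)=b$; conversely, given $(p,p')\in P_b\times_M P_b$ the unique $g\in\G^1$ with $pg=p'$ supplied by bijectivity of $\mu$ satisfies $t(g)=\epsilon(p)=b$ and $s(g)=\epsilon(p')=b$, forcing $g\in\gbb$.

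The remaining condition for the forward direction, local sections of $\pi|_{P_b}$, is where I expect the main technical difficulty. Starting from a local section $\sigma\colon U\to P$ of $\pi$, the induced local trivialization $\pi^{-1}(U)\simeq U\ftimes{\epsilon\sigma}{t}\G^1$ given by $(x,g)\mapsto\sigma(x)g$ identifies $P_b\cap\pi^{-1}(U)$ with $\{(x,g)\mid\epsilon\sigma(x)=t(g),\ s(g)=b\}$, so a local section of $\pi|_{P_b}$ reduces to producing a continuous selection $g(x)\in s^{-1}(b)\cap t^{-1}(\epsilon\sigma(x))$ on a neighborhood of any chosen $x_0\in U$. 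I expect to resolve this by exploiting the free action of the compact group $\gbb$ on $P_b$ with Hausdorff quotient $M$, invoking a standard principal bundle criterion for free actions of compact groups on suitably nice spaces (a topological analogue of Palais' slice theorem).

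For the reverse direction, I set $P:=P_b\times_{\gbb}\gub$ with the $\G$-structure of Remark \ref{r:anchor fibres}, and define $\tilde\pi\colon P\to M$ by $\tilde\pi([p,g]):=\pi(p)$; this is well-defined by $\gbb$-invariance of $\pi$ and continuous since it factors through a quotient map. Local sections of $\tilde\pi$ are obtained by composing local sections of $\pi$ with the inclusion $p\mapsto[p,i(b)]$; $\G$-invariance is immediate from $\tilde\pi([p,g]g')=\tilde\pi([p,gg'])=\pi(p)=\tilde\pi([p,g])$; and the homeomorphism property of $\mu$ for $\tilde\pi$ is verified directly, with inverse sending $([p,g],[p',g'])$ with $\pi(p)=\pi(p')$ to $([p,g],g^{-1}hg')$, where $h\in\gbb$ is the unique element with $p'=ph$ provided by the $\gbb$-bundle structure on $\pi\colon P_b\to M$.

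Finally, the mutual inverseness follows from Proposition \ref{formY}. Starting with $\pi\colon P\to M$, restricting to $P_b$ and then extending gives $P_b\times_{\gbb}\gub$, which is $\G$-homeomorphic to $P$ via $[p,g]\mapsto pg$ in a way that intertwines the two projections to $M$. In the other direction, starting with $\pi\colon P_b\to M$, extending to $P_b\times_{\gbb}\gub$ and then taking the fiber over $b$ yields $\{[p,g]\mid s(g)=b\}$; using the equivalence $(p,g)\sim(pg,i(b))$ for $g\in\gbb$, every such class has a unique representative of the form $[p',i(b)]$, giving a homeomorphism back to $P_b$ that matches the two projections to $M$.
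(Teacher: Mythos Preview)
Your proposal follows essentially the same architecture as the paper's proof: restrict $\mu$ to get condition~(3) for $P_b\to M$, build the extension as $P_b\times_{\gbb}\gub$ with $\tilde\pi([p,g])=\pi(p)$, and invoke Proposition~\ref{formY} for mutual inverseness. Two points of comparison are worth noting.

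First, you correctly flag the local-sections condition for $\pi|_{P_b}$ as the delicate step. The paper dispatches this in a single sentence (``the local sections of $\pi$ will give local sections for the restriction'') without further argument, whereas you unwind it to the problem of a continuous selection $g(x)\in s^{-1}(b)\cap t^{-1}(\epsilon\sigma(x))$, i.e.\ a local section of $s|_{\gub}\colon\gub\to\G^0$. Your instinct to reach for a slice-theorem argument is reasonable, but be aware that Palais-type results typically need Lie hypotheses; in the purely topological setting of this paper you would instead want to lean on the standing properness assumptions (so that $\gub$ and $P_b$ are compact) together with compact generation and Hausdorffness. The paper does not spell out this step either, so your identification of the gap is apt rather than a defect of your proposal.

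Second, for condition~(3) in the forward direction you observe that $\mu_b$ is literally the restriction of the homeomorphism $\mu$ to a pair of subspaces that correspond under $\mu$, which immediately gives continuity of the inverse; the paper instead re-argues bijectivity and then appeals to compactness of $P_b\ftimes{\epsilon}{t}\gbb$ to upgrade a continuous bijection to a homeomorphism. Similarly, in the reverse direction you write down an explicit inverse for $\mu$, while the paper again uses the compact-to-Hausdorff trick. Both routes are valid; yours is marginally more direct.
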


\begin{proof} 
Suppose that $\pi\colon P \to M$ is a principal $\g$-bundle.  Fix $p\in P$, an open neighborhood $U\subseteq M$ of $\pi(p)$ with a local section $\sigma\colon U\to \pi^{-1}(U)$, and an open neighborhood $V\subseteq\g^0$ of $\epsilon(p)$ with a  local section $\tau\colon V\to t|_{\glb}^{-1}(V)$.  Denote by $W$ the open set $\sigma^{-1}(\pi^{-1}(U)\cap\epsilon^{-1}(V))$.  Then $\upsilon\colon W\to P_b$ defined by $\upsilon(w)=\sigma(w)\tau(\epsilon(\sigma(w)))$ is a well-defined continuous section $W\to\pi|_{P_b}^{-1}(W)$.  Also, $\pi(pg)=\pi(p)$ for every $p\in P_b$ and $g\in\gbb$.  We now need only to verify Condition~(3) of Definition~\ref{d:princ bdle}.

Since $\pi\colon P\to M$ is a principal $\g$-bundle, the homeomorphism $\mu\colon P \ftimes{\epsilon}{t} \g^1 \to P \times_{M}P$ sending $(p, g)$ to $(p, pg)$ restricts to a homeomorphism $\mu_b$ from $(P_b) \ftimes{\epsilon}{t} \gbb=P_b\times\gbb$ to its image, which is contained in $P_b \times_M P_b$.  Fix $(p, q) \in P_b \times_M P_b$.  Since $\mu$ is surjective, there exists $(p, g)\in P\ftimes{\epsilon}{t}\G^1$ such that $(p, pg) = (p,q)$.  But then $g \in \gbb$, hence $(p,g)\in P_b\times\gbb$.  Therefore, $\mu_b$ is a homeomorphism from $P_b\times\gbb$ to $P_b\times_M P_b$.

Conversely, suppose that $\pi_b\colon P_b\to M$ is a principal $\gbb$-bundle.  Then $P_b\times\gub$ is a right $\g$-space with anchor map $s\circ\pr_2$ (where $\pr_i$ is the $i$th projection map) and action $(p,g)g'=(p,gg')$.  Since this action commutes with the anti-diagonal action of $\gbb$ on $P_b\times\gub$, the $\g$-action descends to a right $\g$-action on $P':=P_b\times_{\gbb}\gub$.

Since $\pi_b\circ\pr_1$ is constant on $\gbb$-orbits, it descends to a surjective continuous map $\pi'\colon P'\to M$ sending $[p,g]$ to $\pi_b(p)$.  Fixing $x\in M$, there is an open neighbourhood $U$ of $x$ and a section $\upsilon\colon U\to\pi_b^{-1}(U)$.  Define $\sigma(x):=[\upsilon(x),i(b)]$, where $i(b)$ is the identity arrow at $b$; this defines a section $U\to(\pi')^{-1}(U)$.  It follows that $\pi'$ admits local sections.  It is immediate that $\pi'$ satisfies Condition~(2) of Definition~\ref{d:princ bdle}.  We now need only to show Condition~(3).

Define $\mu\colon P'\ftimes{\epsilon'}{t}\g^1\to P'\times_M P'$ by $\mu([p,g],g')=([p,g],[p,gg'])$.  Then $\mu$ is continuous, and it follows from the freeness of the $\gbb$-action on $P_b$ that $\mu$ is injective.  Given $([p_1,g_1],[p_2,g_2])\in P'\times_M P'$, we have $\pi'([p_1,g_1])=\pi'([p_2,g_2])$.  It follows that there exists some $h\in\gbb$ such that $p_2=p_1h$.  But then $\mu([p_1,g_1],g_1^{-1}hg_2)=([p_1,g_1],[p_2,g_2])$.  This shows that $\mu$ is surjective.  

To show that $\mu^{-1}$ is continuous, we need some ingredients first.  Let $\delta\colon P_b\times_M P_b\to\gbb$ be the continuous map $\pr_2\circ\mu_b^{-1}$.  For fixed $(p_1,p_2)\in P_b\times_M P_b$, $\delta(p_1,p_2)$ is the unique element of $\gbb$ satisfying $p_1\delta(p_1,p_2)=p_2$.  From this uniqueness it follows that $\delta$ satisfies the identity
\begin{equation}\labell{e:delta}
\delta(p_1h_1,p_2h_2)=h_1^{-1}\delta(p_1,p_2)h_2
\end{equation}
for all $(p_1,p_2)\in P_b\times_M P_b$ and $h_1,h_2\in\gbb$.  Next, the fibered product $(P_b\times\gub)\times_M(P_b\times\gub)$ with respect to the quotient map $\pi\circ\pr_1\colon P_b\times\gub\to M$ admits a $(\gbb\times\gbb)$-action, given by $$(h_1,h_2)\cdot((p_1,g_1),(p_2,g_2))=((p_1h_1^{-1},h_1g_1),(p_2h_2^{-1},h_2g_2)),$$ whose orbit space is $P'\times_M P'$. Define
$$D\colon (P_b\times\gub)\times_M(P_b\times\gub)\longrightarrow\g^1 $$  by the assignment  $$((p_1,g_1),(p_2,g_2))\longrightarrow g_1^{-1}\delta(p_1,p_2)g_2.$$  Then $D$ is continuous, and it follows from Equation~\eqref{e:delta} that it is $(\gbb\times\gbb)$-invariant.  Thus $D$ descends to a continuous map $\widehat{D}\colon P'\times_M P'\to\g^1$, sending $([p_1,g_1],[p_2,g_2])$ to $g_1^{-1}\delta(p_1,p_2)g_2$.  A straightforward calculation shows that $$[p_1,g_1]\widehat{D}([p_1,g_1],[p_2,g_2])=[p_2,g_2],$$
and so we have $\mu^{-1}([p_1,g_1],[p_2,g_2])=([p_1,g_1],\widehat{D}([p_1,g_1],[p_2,g_2])$.  In particular, $\mu^{-1}$ is continuous.

To prove the last claim, let $\pi\colon P\to M$ be a principal $\g$-bundle, let $\pi_b\colon P_b\to M$ be the restricted principal $\gbb$-bundle, and let $\pi'\colon P'\to M$ be the principal $\g$-bundle constructed as above from $\pi_b$.  By Proposition~\ref{p:formY} and Remark~\ref{i:formP} it is straightforward to check that $\pi\colon P\to M$ is isomorphic as a principal $\g$-bundle to $\pi'\colon P'\to M$.  On the other hand, given a principal $\gbb$-bundle $\pi_b\colon P_b\to M$, let $\pi'\colon P'\to M$ be the principal $\g$-bundle constructed as above, and let $\pi'_b\colon P'_b\to M$ be the restricted principal $\gbb$-bundle.  Again, it is straightfoward to check that the map $\Psi\colon P_b\to P'_b$ sending $p$ to $[p,i(b)]$ is a $\gbb$-homeomorphism such that $\pi_b(p)=\pi'_b(\Psi(p))$ for all $p\in P_b$.
\end{proof} 

\begin{example}\labell{x:universal bdle}
Let $\g$ be a transitive groupoid, fix $b\in\g^0$, and suppose that $t|_{\glb}\colon\glb\to\g^0$ admits local sections.  Let $\rho\colon E\gbb\to B\gbb$ be the universal $\gbb$-bundle.  Then we obtain a principal $\G$-bundle $\rho'\colon E\g\to B\gbb$, where $E\g:=E\gbb\times_\gbb\gub$, which we call the \textbf{universal $\g$-bundle}.  This is justified by the following proposition, although we need a definition first.
\end{example}

\begin{definition}\labell{d:pullback bdle}
Let $\pi\colon P\to M$ be a principal $\g$-bundle with anchor map $\epsilon$, and let $F\colon N\to M$ be a map.  The \textbf{pullback bundle} $F^*P\to N$ is the principal $\g$-bundle with total space $$F^*P=\{(x,p)\in N\times P\mid F(x)=\pi(p)\},$$ anchor map $\epsilon\circ\pr_2$, and action $(x,p)g=(x,pg)$.
\end{definition}

\begin{proposition}\labell{p:EG}
Let $\G$ be a transitive groupoid with compact stabilizers, fix $b\in\G^0$, and suppose that $t|_{\glb}\colon\glb\to\g^0$ admits local sections.  Any principal $\g$-bundle $P\to M$ is isomorphic to a pullback of $\rho'\colon E\G \to B\gbb$ to $M$, and isomorphism classes of principal $\g$-bundles over $M$ are in bijection with homotopy classes of maps $M\to B\gbb$.  Moreover, if $\calB_{\g}(M)$ is the set of all isomorphism classes of principal $\g$-bundles over $M$ and $[M,B\gbb]$ the set of all homotopy classes of maps $M\to B\gbb$, then the functors $\calB_{\g}$ and $[\cdot,B\gbb]$ from spaces to sets are naturally isomorphic.
\end{proposition}

\begin{proof}
It follows from Lemma~\ref{i:rest} and Proposition~\ref{p:restric-princ-bundle} that there is a natural isomorphism from $\calB_{\g}$ to $\calB_{\gbb}$.  Compose this with the standard natural isomorphism from $\calB_{\gbb}$ to $[\cdot,B\gbb]$.  The result follows once we show that for a fixed $\psi\colon M\to B\gbb$ that $\psi^*E\g$ is isomorphic to $(\psi^*E\gbb)\times_\gbb\gub$.  By Remark~\ref{i:formP}, $\psi^*E\g$ is isomorphic to $\psi^*(E\gbb\times_\gbb\gub)$.  This in turn is isomorphic to $\psi^*(E\gbb)\times_\gbb\gub$ as principal $\gbb$-bundles with fiber $\gub$ by a standard bundle theory argument.  These bundles are principal $\g$-bundles (Proposition~\ref{p:restric-princ-bundle}), and it is straightforward to check that this isomorphism is $\g$-equivariant.  This completes the proof.
\end{proof}

\begin{corollary}\labell{c:EG}
Let $\G$ be a transitive groupoid with proper target map and stabilizers that are topological manifolds, fix $b\in\g^0$, and suppose that $t|_{\glb}\colon\glb\to\g^0$ admits local sections. The space $E\G$ is a $\G$-CW-complex.
\end{corollary}

\begin{proof}
This follows from Proposition~\ref{p:EG}, Proposition~\ref{P:cw}, the fact that a proper target map implies that the stabilizers of $\g$ are compact, and the fact that $E\gbb$ is a $\gbb$-CW-complex (see \cite[Theorem 1.9]{Lueck-survey}). In particular, since $\gbb$ is a compact group whose underlying topology is a manifold, by von Neumann's solution to Hilbert's fifth problem \cite{vH}, it admits a Lie structure.  By the equivariant triangulation theorem \cite{illman}, the skeleta of the geometric realization model of $E\gbb$ as described in \cite[Sec.\ 16.5]{may2} are thus finite $\gbb$-CW complexes.  Proposition~\ref{P:cw} now applies.
\end{proof}

\end{document}